\newtheorem{theorem}{Theorem}
\newtheorem{proposition}{Proposition}
\newtheorem{remark}{Remark}
\newtheorem{definition}{Definition}
\begin{document}
\title{\LARGE \bf   Cooperative Vehicle Speed Fault Diagnostics and Correction}

\author{Mohammad Pirani, Ehsan Hashemi, Amir Khajepour, Baris Fidan, Bakhtiar Litkouhi and Shih-Ken Chen  
\thanks{ M. Pirani, E. Hashemi,  A. Khajepour and B. Fidan   are with the Department of Mechanical and Mechatronics Engineering, University of Waterloo, ON, N2L 3G1, Canada  (email: mpirani, ehashemi, a.khajepour, fidan @uwaterloo.ca).
Bakhtiar Litkouhi and Shih-Ken Chen  are with the R\&D Department, General Motors Co., Warren, MI 48093, USA}}

\maketitle

\thispagestyle{empty}
\pagestyle{empty}


\begin{abstract}
Reliable estimation (or measurement) of vehicle states has always been an active topic of research in the automotive industry and academia. Among the vehicle states, vehicle speed has a priority due to its critical importance in traction and stability control. Moreover, the emergence of new generation of communication technologies has brought a new avenue to traditional studies on vehicle estimation and control. To this end, this paper introduces a set of distributed function calculation algorithms for vehicle networks, robust to communication failures. The  introduced algorithms enable each vehicle to gather information from other vehicles in the network in a distributed manner. A procedure to use such a bank of information for a single vehicle to diagnose and correct a possible fault in its own speed estimation/measurement is discussed. The functionality and performance of the proposed algorithms are verified via illustrative examples and simulation results. 

\end{abstract} 


\section{Introduction} \label{sec:Intro}
The emergence of {\it Internet of vehicles}, as a tangible representation of Internet of things, has significantly changed the shape of the urban transportation \cite{Shensherman, Linshen, Stankovic}. The rate of growth of this field of research has become so high that it is expected that the traffic of Internet of vehicles duplicates in next few years. There are various applications of Internet of vehicles which are primarily pertaining the driving safety issues. The advent of ever growing vehicle-to-vehicle (V2V) communications enable drivers to transmit and receive necessary information which can be potentially used for traffic management, fuel consumption issues, as well as increasing the reliability of individual vehicle state estimation and control \cite{noureddin, Tubaishat, Patra}. More particularly, vehicle active safety systems can take advantage of the extra information obtained from the network of connected vehicles to design more reliable vehicle velocity estimation algorithms, which undergoes significant research during the past decade \cite{Wenzel,Antonov2011a, piraniTCST, HashemiCEP}.

Unfortunately, there are  few efficient distributed calculation algorithms\footnote{These algorithms are sometimes referred to as {\it distributed estimation algorithms}; however, in this paper we use distributed calculation as used in \cite{Sundaram2011} in order to avoid confusion with single vehicle state estimation.}  for vehicle ad-hoc networks, compatible with the current available inter-vehicle communication standards. To this end, this paper applies a distributed  calculation algorithm to obtain vehicles' states (position, speed and acceleration) from other vehicles in the network and proposes a procedure to diagnose and correct faults (if any) in the speed of any single vehicle.

Distributed calculation techniques have been widely investigated during the past decade \cite{Olfatishamma, Olfatifilter,Riberio,  Kamgarpour, Fawzi}. The essence of distributed function calculation is that each member of the network calculates some quantity from all other members in the network (even from the members which are not directly connected to) via only local interactions (communicating with neighbors)\cite{Sundaram2011, Sundaram, Varayaa, Cattivelli}. More concisely, some global information is obtained via using local interactions \cite{Shreyasmitra, Speranzon}.  One of the main contributions of distributed calculation algorithms is to translate the dynamics (system-theoretic) notions of observability into network-theoretic concepts. In this direction, this paper uses one of the developed distributed calculation algorithms which is compatible with vehicle ad-hoc networks to estimate vehicle speeds in a distributed manner. It also proposes an algorithm for utilizing such  globally gathered network information to diagnose possible failures in speed measurement (or estimation) of each single vehicle.

The contributions of this paper are: 
\begin{enumerate}
    \item We apply one of the developed distributed calculation algorithms in \cite{Sundaram2011} to a network of connected vehicles which is robust to communication failures. Based on the sufficient condition proposed for the algorithm, we introduce a class of vehicle networks which certifies a specific level of connectivity. The proposed distributed calculation methodology benefits reliable vehicle state estimation, which is essential in related researches in automotive applications. 
    
    \item We discuss on the ways to use the information gathered from the distributed calculation module to diagnose and correct if there exists a fault in the measured (or estimated) speed of any single vehicle in the network. 
\end{enumerate}
 The paper is organized as follows. In Section \ref{sec:not} some notations and mathematical definitions are introduced and a brief overview of  inter-vehicular communications standards is discussed. Section \ref{sec:ffwsgf} defines the problem of distributed calculation of vehicle states (position, speed and acceleration) in a given network to diagnose faults in the speed of each vehicle. In Section \ref{sec:Sec3} the distributed calculation algorithm is introduced for both cases where there is no communication failure and when there exist some failures in inter-vehicular communications. Vehicle speed fault diagnosis and correction algorithms, based on the gathered information from the network, are discussed in Section \ref{sec:fdetcor}. Section \ref{sec:Results} presents some illustrative examples, which show the effectiveness of the discussed algorithms. Section \ref{sec:Conclusion} concludes the paper.

\section{Notation and Definitions} \label{sec:not}

This section introduces some mathematical notations used in this paper, as well as a brief introduction to vehicle-to-vehicle (V2V) communications.

\subsection{Notation}
In this paper, an undirected network is denoted by  $\mathcal{G}=\{\mathcal{V},\mathcal{E}\}$,  where $\mathcal{V} = \{v_1, v_2, \ldots, v_n\}$ is a set of nodes (or vertices) and $\mathcal{E} \subset \mathcal{V}\times\mathcal{V}$ is the set of edges.  Neighbors of node $v_i \in \mathcal{V}$ are given by the set $\mathcal{N}_i = \{v_j \in \mathcal{V} \mid (v_i, v_j) \in \mathcal{E}\}$. The  degree of each node $v_i$ is denoted by $d_i=|\mathcal{N}_i|$. A $k$-nearest neighbor platoon, $\mathcal{P}(n,k)$, is a network comprised of $n$ nodes (here vehicles),  where each node can communicate with its $k$ nearest neighbors from its back and $k$ nearest neighbors from its front, for some $k \in \mathbb{N}$. This definition is compatible with vehicle networks,  due to the limited  sensing and communication range for each  vehicle and the distance between the consecutive vehicles \cite{PiraniITS}. An example of $\mathcal{P}(n,k)$ is shown in Fig. \ref{fig:2nndef}. This specific network topology, other than its applicability in analyzing vehicle-to-vehicle communication scenarios, has a particular network property, called $k$-connectivity, which will be used later in Section \ref{sec:Results}.
\begin{figure}[h!]
\centering
\includegraphics[width=1\linewidth]{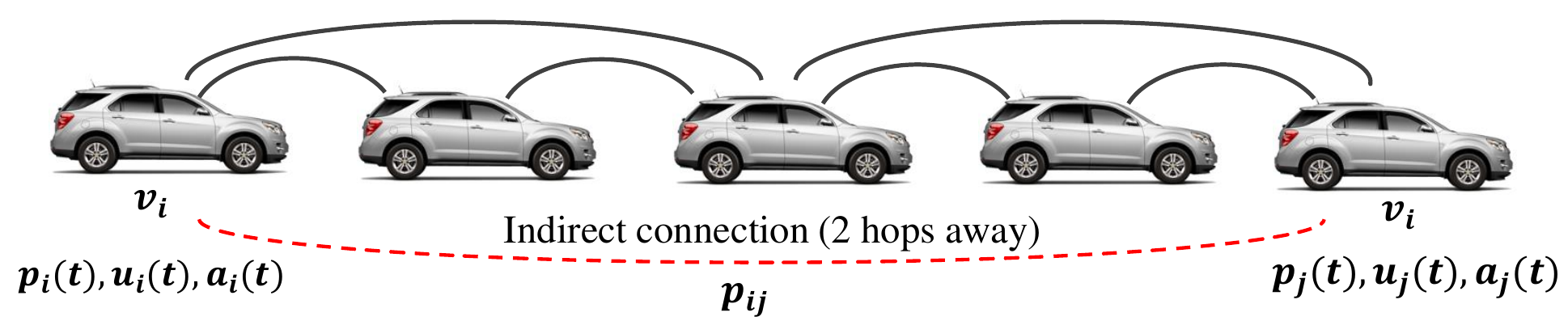}
\caption{A $2$-nearest neighbor platoon of 5 vehicles, $\mathcal{P}(5,2)$.}
\label{fig:2nndef}
\end{figure}

\subsection{V2V Communications}

Dedicated Short-Range Communications (DSRC) are one-way or two-way short-range to medium-range wireless communication channels that are specifically designed for automotive applications. It works in 5.9 GHz band, uses 75 MHz bandwidth channel, and provides low latency communication for safety applications. However, existing data sources and communications media (e.g. mobile phone and radio frequency) could be used for non-safety applications as well. The range of coverage of such communication is up to 1000 meters and the rate of data transmission is up to 27 Mbps. On of the standards which supports some DSRC applications is SAE J2735. There are various information that are sent via this standard, including (but not limited to): temporal ID of the vehicle, time of transmission, latitude, longitude and elevation of the vehicle, longitudinal speed and acceleration of the vehicle, and break system status. Among various information supported by this standard, in this paper, the followings are used: (i) vehicle $v_i$'s position (called $p_i$), (ii) its longitudinal velocity (speed), $u_i$, and (iii) longitudinal acceleration, $a_i$.

\section{Problem definition}\label{sec:ffwsgf}

Consider a network of connected vehicles $v_1, v_2, .., v_n$ which is represented via a graph $\mathcal{G}=\{\mathcal{V}, \mathcal{E}\}$. These vehicles can communicate with each other and share or disseminate information throughout the network. As mentioned in Section \ref{sec:not}, we only focus on the case where vehicles send their  positions, speeds (longitudinal velocities) and longitudinal accelerations to each other.  Each vehicle $v_i$ is able to estimate (or measure) its longitudinal velocity, ${u}_i$, via a specific method.\footnote{The velocity estimation method of each single vehicle is out of the scope of this paper and the reader is encouraged to see  \cite{Wenzel,Antonov2011a, piraniTCST, HashemiCEP}.} The goal for vehicle $v_i$ is to find out a measure for assessing the correctness of its speed $u_i$.  Vehicle $v_i$ should utilize an independent source of information. Such an independent source of information can be the values (positions, velocities and accelerations) of other vehicles, together with an algorithm to relate these obtained values to each other.  Therefore, a distributed calculation algorithm is introduced which enables vehicle $v_i$ to use  in order to gather the positions, speeds and accelerations of other vehicles in the network and then assess the reliability of its own  speed based on the gathered data. The overall procedure is:
 
\begin{enumerate}
    \item[(1)] \textbf{Distributed Calculation:} Each vehicle $v_i$ applies a distributed algorithm to gather the information of all of the vehicles in the network via using only local information exchange (with the vehicles in its communication range, called neighbors). Two different versions of this algorithm are proposed in this paper. In the first algorithm, it is assumed that there is no failure in vehicle communications and in the second algorithm, we assume that some of the vehicles fail to disseminate their information properly.

    \item[(2)] \textbf{Speed Fault Diagnostics:} After obtaining the information from other vehicles, vehicle $v_i$ performs an algorithm, based on (relative) positions, velocities and accelerations, to find out whether its own velocity is faulty or correct.

    \item[(3)] \textbf{Speed Correction:} If vehicle $v_i$ finds out that its self-speed-measurement is faulty (based on the previous step), it uses the information of other vehicles in the network to correct its own velocity. In particular, $v_i$ calculates its speed from the eye of other vehicles in the network and performs an averaging to increase the reliability of its speed. 
    
\end{enumerate}

An overview of the above three steps is schematically shown in Fig. \ref{fig:2nndddef}.
\begin{figure}[h!]
\centering
\includegraphics[width=0.8\linewidth]{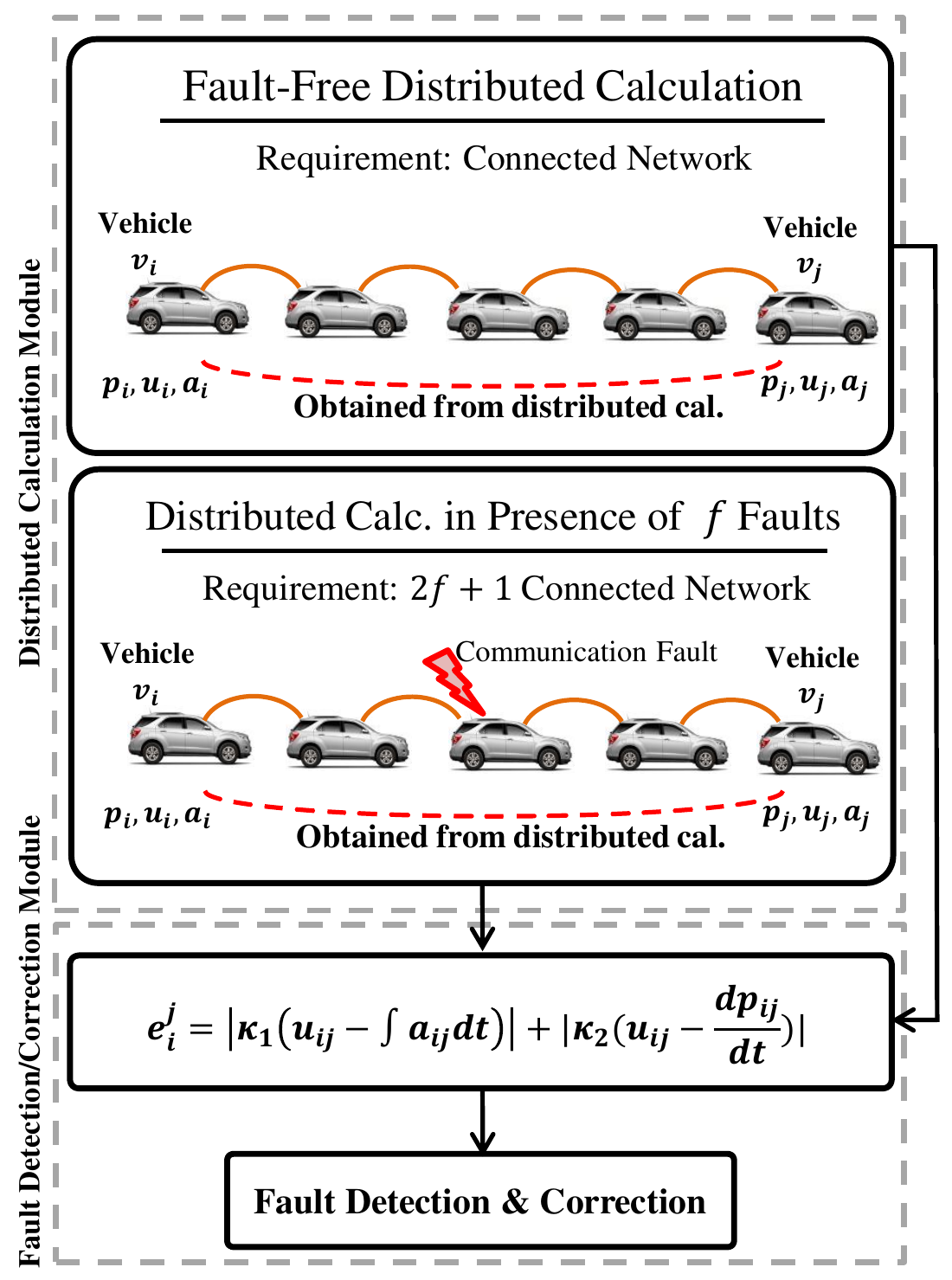}
\caption{Distributed velocity reliability assessment procedure.}
\label{fig:2nndddef}
\end{figure}
In the following section, we begin by introducing distributed calculation algorithm (step (1)). The fault diagnosis and speed correction schemes (steps 2 and 3) will be discussed in Section \ref{sec:fdetcor}. 

\section{Robust Distributed Calculation} \label{sec:Sec3}
In this section we introduce an  algorithm for a vehicle to gather the velocities of all vehicles in a network in a distributed manner, for two cases: (i) communication fault free case, and (ii) communication fault prone case. The theoretical foundation of these algorithms are derived in \cite{Sundaram2011, Sundaram}. The state of vehicle $v_j$, which can be its position $p_j$, speed $u_j$ or acceleration $a_j$ in this paper, is denoted simply by scalar $x_j[0]$. The objective is that this value becomes available for  vehicle $v_i$ in the network which is not in the communication range of vehicle $v_j$, as shown in Fig. \ref{fig:2nndef}. To yield this, vehicle $v_i$ performers a linear iteration policy using the following  time invariant updating rule
\begin{equation}
x_i[k+1]=w_{ii}x_i[k]+\sum_{j\in \mathcal{N}_i}w_{ij}x_j[k],
\label{eqn:1}
\end{equation}
where $w_{ii},w_{ij}>0$ are some predefined weights. Dynamics \eqref{eqn:1} in the vector form can be written as
\begin{equation}
\mathbf{x}[k+1]=\mathcal{W} \hspace{0.5 mm} \mathbf{x}[k],
\label{eqn:2}
\end{equation}
where $\mathcal{W}_{n\times n}$ is a matrix which shows the communication between vehicles in the network. In addition to dynamics \eqref{eqn:1}, at each time step, vehicle $v_i$ has access to its own value and the values of its neighbors. Hence, the output measurements for $v_i$ is defined as
\begin{equation}
y_i[k]=\mathcal{C}_i\mathbf{x}[k],
\label{eqn:measu}
\end{equation}
where $\mathcal{C}_i$ is a $(d_i +1)\times n$ matrix with a single 1 in each row that denotes the positions of the state-vector $\mathbf{x}[k]$ available to vehicle $v_i$ (i.e., these positions correspond to vehicles that are neighbors of $v_i$, along with vehicle  $v_i$ itself). The aim of such information dissemination is for each vehicle to obtain the initial condition of all other vehicles after running the linear dynamics \eqref{eqn:2} after some time steps.  

\begin{remark}
It should be noted that the evolution of state $\mathbf{x}[k]$ in \eqref{eqn:2} does not mean the evolution of vehicle's position, speed or acceleration, and dynamics \eqref{eqn:2} is only used for implementing a distributed calculation algorithm. Here, it is only $\mathbf{x}[0]$ that reflects the physical states of the vehicles. Moreover, in this study, we consider the fact that the communication between vehicles, dynamics \eqref{eqn:2}, is much faster than vehicle motions. In particular, although calculated quantities (positions, speeds and accelerations) obtained from the distributed  algorithm  are available after few time steps in inter-vehicle communications, such a time delay is negligible due to the sluggishness of vehicle's physical motion compared to the inter-vehicular data transmission rate. 
\end{remark}

Now suppose that there exist some vehicles that do not obey \eqref{eqn:2} to update their value.  More formally, at time step $k$, vehicle $v_i$'s attitude deviates from the predefined policy \eqref{eqn:1} and adds an arbitrary value, $\zeta_i[k]$, to its updating policy\footnote{In the literature such agents are called {\it adversarial} or {\it malicious} agents.}. In this case, the updating rule \eqref{eqn:1} will turn to 
\begin{equation}
x_i[k+1]=w_{ii}x_i[k]+\sum_{j\in \mathcal{N}_i}w_{ij}x_j[k]+\zeta_i[k],
\label{eqn:1wd}
\end{equation}
and if there are $f$ of these faulty vehicles, \eqref{eqn:1wd} in vector form  becomes 
\begin{equation}
\mathbf{x}[k+1]=\mathcal{W} \mathbf{x}[k]+\underbrace{[\mathbf{e}_1 \hspace{3mm} \mathbf{e}_2 \hspace{3mm} ... \hspace{3mm} \mathbf{e}_f]}_{\mathcal{B}}\boldsymbol{\zeta}[k],
\label{eqn:f2}
\end{equation}
where $\boldsymbol{\zeta}[k]=[\zeta_1[k], \zeta_2[k], ..., \zeta_f[k]]^T$ and $\mathbf{e}_j$  denotes an $n\times 1$ unit vector with a single nonzero entry with value 1 at its $j$-th position. From this view, dynamics \eqref{eqn:2} is a special case  of \eqref{eqn:f2} when there is no faulty vehicle. The continuous-time version of the above network dynamics is studied extensively \cite{PiraniSundaramArxiv, ArxiveRobutness}. The set of faulty vehicles in \eqref{eqn:f2} is unknown and consequently matrix $\mathcal{B}$ is  unknown.  The objective is for  vehicle $v_i$ to truly observe the initial values of all other vehicles, despite the actions of the faulty vehicles.  Based on  \eqref{eqn:f2}, the set of all values seen by vehicle $v_i$ during the first $L + 1$ time-steps of the linear iteration is given by
\begin{multline}
\underbrace{\begin{bmatrix}
       y_i[0]       \\[0.3em]
        y_i[1] \\[0.3em]
        \vdots \\[0.3em]
        y_i[L] \\[0.3em]
     \end{bmatrix}}_{y_i[0:L-1]}=\underbrace{\begin{bmatrix}
       \mathcal{C}_i      \\[0.3em]
       \mathcal{C}_i\mathcal{W} \\[0.3em]
        \vdots \\[0.3em]
        \mathcal{C}_i\mathcal{W}^L \\[0.3em]
     \end{bmatrix}}_{\mathcal{O}_{i,L}}\mathbf{x}[0]+\\
     \underbrace{\begin{bmatrix}
       0 & 0 & \cdots & 0      \\[0.3em]
       \mathcal{C}_i\mathcal{B} & 0 & \cdots & 0  \\[0.3em]
       \vdots & \vdots & \ddots & \vdots   \\[0.3em]
        \mathcal{C}_i\mathcal{W}^{L-1}\mathcal{B} &  \mathcal{C}_i\mathcal{W}^{L-2}\mathcal{B} & \cdots & \mathcal{C}_i\mathcal{B}      \\[0.3em]
     \end{bmatrix}}_{\mathcal{M}_{i,L}}\underbrace{\begin{bmatrix}
       \boldsymbol{\zeta}[0]       \\[0.3em]
        \boldsymbol{\zeta}[1] \\[0.3em]
        \vdots \\[0.3em]
        \boldsymbol{\zeta}[L] \\[0.3em]
     \end{bmatrix}}_{\boldsymbol{\zeta}[0:L-1]}
     \label{eqn:vesfw}
\end{multline} 
where $\mathcal{O}_{i,L}$ and $\mathcal{M}_{i,L}$ are called observability and invertability matrices, respectively.  In order to estimate the initial states, the network dynamical system \eqref{eqn:f2} together with the output measurement \eqref{eqn:measu} should satisfy certain observability conditions. Thus, we analyze the distributed state estimation for the fault-free case and then generalize this to the case where some vehicles update their states with some faults. 
\begin{remark}
The distributed calculation algorithm in the presence of vehicle communication fault analyzed in this paper contains the scenario which a vehicle stops receiving signal from its neighbors. This is the well-known notion called {\it  signal packet drop} which is studied in the communication literature \cite{Hajitouri, Sieler, Hespsurvey}. More formally, in \eqref{eqn:1wd} if we set $\zeta_i[k]= -\sum_{j\in \mathcal{N}_i}w_{ij}x_j[k]$, it becomes equivalent to the case where $v_i$ does not receive the data from its neighbors. Since the analysis in this paper does not depend on the value of $\zeta_i[k]$, the packet dropping scenario can be straightforwardly included in the robust distributed calculation analysis. 
\end{remark}

In the following subsection, we discuss on observability of dynamics \eqref{eqn:2} and output equation \eqref{eqn:measu} when there is no fault in inter-vehicle communications.

\subsection{Fault-Free Case}

For the case where there is no fault in inter-vehicular communications, the second term in \eqref{eqn:vesfw} (containing $\boldsymbol{\zeta}[0:L-1]$) does not exist. In this case, for each vehicle $v_i$ to be able to observe the initial conditions of other vehicles via its measurements, the system should be simply observable, i.e., its observability matrix $\mathcal{O}_{i,L}$ should be full row rank. More formally, if vehicle $v_i$ wants to observe the initial condition of vehicle $v_j$ in the network after $L$ time steps, the row space of $\mathcal{O}_{i,L}$ should contain $\mathbf{e}_j$. The following theorem introduces a freedom in designing the weight matrix $\mathcal{W}$ such that the observability matrix is guaranteed to be full row rank. 

\begin{theorem}[\cite{Sundaram}]
Let $\mathcal{G}(\mathcal{V},\mathcal{E})$ be a fixed and connected graph and define 
$$\mathcal{S}_i=\{v_j| \text{There exists a path from $v_j$ to $v_i$ in $\mathcal{G}$} \}\cup v_i.$$
Moreover, let $\epsilon_i$ be the distance of the farthest vehicle in the network to $v_i$. 
Then for almost\footnote{The {\it{almost}} in Theorem \ref{thm:leb} is due to the fact that  the set of  parameters for which the system is not observable has Lebesgue measure zero \cite{Reinschke}.} any choice of weight matrix $\mathcal{W}$, vehicle $v_i$ can obtain the initial value of vehicle $v_j\in \mathcal{S}_i$ after running the linear iteration \eqref{eqn:2} and output equation \eqref{eqn:measu} after at least $\epsilon_i$ and at most $|\mathcal{S}_i|-d_i$ time steps. 
\label{thm:leb}
\end{theorem}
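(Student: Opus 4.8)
The plan is to recast the claim as an observability statement for the pair $(\mathcal{W},\mathcal{C}_i)$ and then treat the two timing bounds and the genericity over $\mathcal{W}$ separately. In the fault-free case the collected measurements in \eqref{eqn:vesfw} reduce to $y_i[0:L-1]=\mathcal{O}_{i,L}\mathbf{x}[0]$, so vehicle $v_i$ can recover the scalar $x_j[0]$ from its data precisely when the standard basis vector $\mathbf{e}_j$ lies in the row space of $\mathcal{O}_{i,L}$. The whole theorem therefore reduces to controlling, as a function of $L$ and of the entries of $\mathcal{W}$, when $\mathbf{e}_j \in \mathrm{rowspace}(\mathcal{O}_{i,L})$ for every $v_j \in \mathcal{S}_i$. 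I note in passing that, since $\mathcal{G}$ is connected and undirected, $\mathcal{S}_i = \mathcal{V}$ and $|\mathcal{S}_i| = n$; I keep the argument at the level of $\mathcal{S}_i$ so that the statement and bounds read uniformly.

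First I would prove the lower bound via the combinatorial structure of the powers of $\mathcal{W}$. Because $\mathcal{W}$ inherits the sparsity pattern of $\mathcal{G}$, the $(a,b)$ entry of $\mathcal{W}^k$ can be nonzero only when there is a walk of length $k$ from $v_b$ to $v_a$, hence only when $k \ge \mathrm{dist}(v_a,v_b)$. Since $\mathcal{C}_i$ already reads off $v_i$ and its neighbours, each block row $\mathcal{C}_i\mathcal{W}^k$ of $\mathcal{O}_{i,L}$ is zero in every column $j$ with $\mathrm{dist}(v_i,v_j) > k+1$. Consequently, for the farthest vehicle at distance $\epsilon_i$ the corresponding column of $\mathcal{O}_{i,L}$ is identically zero until $L$ is large enough for a length-$\epsilon_i$ walk to register, so $\mathbf{e}_j$ cannot lie in the row space before then; this yields the stated lower bound of $\epsilon_i$ time steps.

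Next I would establish the upper bound of $|\mathcal{S}_i| - d_i$ using the classical observability-index argument. The subspaces $\mathrm{rowspace}(\mathcal{O}_{i,L})$ form a nondecreasing nested chain in $\mathbb{R}^n$ as $L$ grows; a Cayley--Hamilton / invariant-subspace argument shows that once appending $\mathcal{C}_i\mathcal{W}^{L}$ produces no new dimension, the chain is stationary and no later power helps either. Since $\mathcal{C}_i$ contributes $d_i+1$ independent rows already at step $0$, and each productive step thereafter must add at least one dimension before saturation at dimension $|\mathcal{S}_i|$, the chain stabilizes within $|\mathcal{S}_i| - (d_i+1)$ further steps; hence the required information is available within $|\mathcal{S}_i| - d_i$ time steps.

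The main obstacle — and the reason for the word \emph{almost} — is the genericity claim over $\mathcal{W}$, which I would handle by the structural-observability route of \cite{Reinschke}. For fixed $L$, the failure of $\mathbf{e}_j \in \mathrm{rowspace}(\mathcal{O}_{i,L})$ can be written as the simultaneous vanishing of a finite collection of rank-deficiency minors of $\mathcal{O}_{i,L}$, each a polynomial in the free entries $w_{ab}$ of $\mathcal{W}$. A nonzero polynomial vanishes only on a set of Lebesgue measure zero, so it suffices to exhibit a single admissible weight matrix for which $v_j$ is observable; the set of bad weights is then measure zero and observability holds for almost every $\mathcal{W}$. Exhibiting such a realization is exactly where connectivity enters: because a path from $v_j$ to $v_i$ exists, one can construct (or invoke from structural systems theory) a $\mathcal{W}$ respecting the pattern of $\mathcal{G}$ whose associated digraph meets the Reinschke rank/vertex-disjoint-path condition, certifying structural observability of the $j$-th mode. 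Combining the lower bound, the upper bound, and this generic-observability guarantee completes the proof.
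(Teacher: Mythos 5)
The paper itself contains no proof of this theorem: it is imported verbatim from \cite{Sundaram}, and the only proof-related trace in the text is the footnote attributing the word \emph{almost} to a Lebesgue-measure-zero argument via \cite{Reinschke}. Measured against the proof in the cited source, your reconstruction follows essentially the same route: reduce to $\mathbf{e}_j$ lying in the row space of $\mathcal{O}_{i,L}$, get the lower bound from the sparsity of $\mathcal{W}^k$ (entries vanish absent a walk of the right length), get the upper bound from the observability-index chain argument starting at rank $d_i+1$, and get genericity from structured-systems theory. Your bookkeeping also lands on the stated bounds: since $\mathcal{C}_i$ reads neighbors, column $j$ of the block $\mathcal{C}_i\mathcal{W}^k$ vanishes when $\mathrm{dist}(v_i,v_j)>k+1$, so the measurements $y_i[0],\dots,y_i[\epsilon_i-1]$ (that is, $\epsilon_i$ time steps) are necessary, and the stabilization of the nested row spaces within $|\mathcal{S}_i|-(d_i+1)$ productive steps beyond $k=0$ matches the paper's own choice $k_i=n-d_i-1$ in \eqref{eqn:obsdist}.

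One step of your genericity argument deserves tightening, though it is a matter of phrasing rather than substance. Membership $\mathbf{e}_j\in\mathrm{rowspace}(\mathcal{O}_{i,L})$ is not by itself the nonvanishing of a polynomial: it is the rank \emph{equality} ${\rm rank}\left(\begin{bmatrix}\mathcal{O}_{i,L}\\ \mathbf{e}_j^T\end{bmatrix}\right)={\rm rank}\left(\mathcal{O}_{i,L}\right)$, which is neither Zariski-open nor Zariski-closed, so exhibiting a single good $\mathcal{W}_0$ does not immediately confine the bad set to a variety --- conceivably $\mathcal{O}_{i,L}(\mathcal{W}_0)$ has sub-generic rank while the augmented matrix generically has strictly larger rank. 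The standard repair, and what the structural-observability condition you invoke actually delivers (every $w_{ii}$ is a free self-loop parameter, so the cycle-covering condition is automatic and output-connectivity follows from the paths guaranteed by connectedness), is to prove that the \emph{generic rank} of $\mathcal{O}_{i,L}$ equals $|\mathcal{S}_i|$ (here $n$, since for connected undirected $\mathcal{G}$ one has $\mathcal{S}_i=\mathcal{V}$, as you note). Full generic rank is a genuinely polynomial condition (vanishing of all maximal minors), it is certified by a single witness, and it yields $\mathbf{e}_j\in\mathrm{rowspace}(\mathcal{O}_{i,L})$ for \emph{all} $j$ simultaneously off one measure-zero set. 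With that adjustment your proof is complete and coincides with the cited one.
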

 In order to implement the observer algorithm, each vehicle must have an access to the observability matrix to calculate the initial conditions. More precisely, in order to find the initial condition of vehicle $v_j$, $\mathbf{e}_j$ should be in the row space of the observability matrix. Hence, in order to find the initial condition of all vehicles, $I_{(n-1)\times (n-1)}$ should be in the row space of the observability matrix $\mathcal{O}_{i,L}$. If this condition is satisfied, vehicle $v_i$ can find a matrix like $\Gamma_i$ such that
\begin{equation}
\Gamma_i\mathcal{O}_{i,L_i}=I_{(n-1)\times (n-1)},
\label{eqn:gammadarbiar}
\end{equation}
and  based on \eqref{eqn:vesfw} the vector of initial conditions will be obtained. Hence, each vehicle in the network should calculate the observability matrix distributedly to be able to calculate matrix $\Gamma_i$. This is discussed in the following subsection.

\subsection{Distributed Calculation of the Observability Matrix}\label{sec:discsla}
Here we introduce a distributed algorithm to calculate the observability matrix, based on \cite{Sundaram}. Once the set of weights for matrix $\mathcal{W}$ in \eqref{eqn:2} is chosen, each vehicle  performs $n$ runs of the linear iteration, each for $n-d_{min}$ time-steps. For the $j$--th run, vehicle $v_j$ sets its initial condition
to be 1, i.e. $\bar{\mathbf{x}}_j[0]=\mathbf{e}_j$, and all other nodes set their initial conditions to be zero. After performing all runs, each vehicle $v_i$ has access to the following matrix. 
\begin{equation}
\Psi_i= \begin{bmatrix}
       y_{i,1}[0]  &   y_{i,2}[0] & \cdots & y_{i,n}[0] \\[0.3em]
        y_{i,1}[1] &  y_{i,2}[1] & \cdots & y_{i,n}[1]\\[0.3em]
        \vdots & \vdots &   \ddots &  \vdots\\[0.3em]
        y_{i,1}[k_i] &   y_{i,2}[k_i] & \cdots & y_{i,n}[k_i]\\[0.3em]
     \end{bmatrix},
\end{equation}
where $k_i=n-d_i-1$. Using \eqref{eqn:vesfw} $\Psi_i$ can be written as 
\begin{equation}
\Psi_i= \mathcal{O}_{i,n-d_i-1} [\bar{x}_1[0], \bar{x}_2[0], ..., \bar{x}_n[0]],
\label{eqn:obsdist}
\end{equation}
and since  $\bar{\mathbf{x}}_i[0]=\mathbf{e}_i$ we conclude that $\Psi_i=\mathcal{O}_{i,n-d_i-1}$. Hence, each vehicle has access to its observability matrix in a distributed way. From this, each vehicle is able to calculate matrix $\Gamma_i$ from \eqref{eqn:gammadarbiar}.

\subsection{Fault-Prone Case}

In this subsection, we extend what was discussed in the previous subsection to the case where there are some vehicles which fail to communicate properly. Such failures result in misleading other vehicles to run the distributed calculation algorithm appropriately. In this case, the values measured  by vehicle $v_i$ is given by \eqref{eqn:vesfw} where $\boldsymbol{\zeta}[0:L-1]$ is no longer zero for faulty vehicles. Hence, one should design an algorithm to find  initial conditions despite of the actions of these vehicles. It clearly demands a stronger condition on the system observability, as discussed in the following theorem.
\begin{theorem}[\cite{Sundaram2011}]
Suppose that there exists an integer $L$ and a weight matrix $\mathcal{W}$ such that, for all possible sets of faulty vehicles $\mathcal{I}$ of $2f$ faulty vehicles, the matrices $\mathcal{O}_{i,L}$ and $\mathcal{M}_{i,L}^{\mathcal{I}}$ for vehicle $v_i$ satisfy 
\begin{equation}
{\rm rank} \left([\mathcal{O}_{i,L} \quad \mathcal{M}_{i,L}^{\mathcal{I}}  ]\right)=n+ {\rm rank} \left( \mathcal{M}_{i,L}^{\mathcal{I}}\right).
\end{equation}
Then, if the nodes run the linear iteration for $L+1$ time steps with the weight matrix $\mathcal{W}$, vehicle $v_i$ can calculate initial values $x_1[0], x_2[0], ..., x_n[0]$, even if when up to $f$ vehicles fail to update their states correctly. 
\label{thm:conditionnn}
\end{theorem}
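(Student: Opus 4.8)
The plan is to prove the theorem by an identifiability (unique-decodability) argument: I would show that the rank hypothesis forces the true initial state $\mathbf{x}[0]$ to be the \emph{only} vector consistent with the measured sequence $y_i[0:L]$ under \emph{any} admissible fault pattern, and then observe that this uniqueness immediately yields a decoding rule. Throughout I work from the input--output relation \eqref{eqn:vesfw}, treating the fault locations (and hence the relevant columns of the invertibility matrix) as unknown.

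First I would reinterpret the rank condition. Using $\mathrm{rank}([A\ B]) = \mathrm{rank}(A) + \mathrm{rank}(B) - \dim(\mathrm{col}(A)\cap\mathrm{col}(B))$, the hypothesis $\mathrm{rank}([\mathcal{O}_{i,L}\ \mathcal{M}_{i,L}^{\mathcal{I}}]) = n + \mathrm{rank}(\mathcal{M}_{i,L}^{\mathcal{I}})$ is equivalent to the pair of statements: (a) $\mathcal{O}_{i,L}$ has full column rank $n$, and (b) $\mathrm{col}(\mathcal{O}_{i,L}) \cap \mathrm{col}(\mathcal{M}_{i,L}^{\mathcal{I}}) = \{0\}$. (Both are forced because $\mathrm{rank}(\mathcal{O}_{i,L}) \le n$ and the intersection dimension is nonnegative, so equality can hold only when each is extremal.) These two consequences are the workhorses of the argument, so I would establish them up front before using them.

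Next I would run the distinguishability step. Suppose two explanations of the \emph{same} data coexist: the true triple $(\mathbf{x}[0], \mathcal{F}, \boldsymbol{\zeta}[0:L-1])$ and an alternative $(\mathbf{x}'[0], \mathcal{F}', \boldsymbol{\zeta}'[0:L-1])$, with $|\mathcal{F}|, |\mathcal{F}'| \le f$, each producing the identical output via \eqref{eqn:vesfw}. Subtracting the two output equations gives $\mathcal{O}_{i,L}(\mathbf{x}[0]-\mathbf{x}'[0])$ equal to a single combination of fault columns supported on $\mathcal{F} \cup \mathcal{F}'$. The crucial step --- and the one I expect to be the main obstacle --- is recognizing that this union has at most $2f$ elements, so the right-hand side lies in $\mathrm{col}(\mathcal{M}_{i,L}^{\mathcal{I}})$ with $\mathcal{I} = \mathcal{F}\cup\mathcal{F}'$, which is exactly the $2f$-fault regime over which the hypothesis is quantified. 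This is the classic place where the ``$2f$'' in the assumption meets the ``$f$'' in the conclusion: correcting $f$ faults requires distinguishing any two fault patterns of size $f$, whose combined support is $2f$. The left-hand side lies in $\mathrm{col}(\mathcal{O}_{i,L})$, so by (b) the common vector vanishes, and then by (a) we conclude $\mathbf{x}[0] = \mathbf{x}'[0]$.

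Finally I would convert identifiability into the claimed computation. Since the true state is the unique vector consistent with the data for \emph{every} fault set of size at most $f$, vehicle $v_i$ can decode by a finite search: for each of the $\binom{n}{f}$ candidate fault supports it deletes the corresponding columns of the invertibility matrix and solves the resulting consistent linear system for $\mathbf{x}[0]$ (e.g.\ by applying a left inverse of $\mathcal{O}_{i,L}$ to the reduced measurements, in the spirit of \eqref{eqn:gammadarbiar}). The true support always yields a consistent solution, and the uniqueness result guarantees that any consistent candidate returns the same $\mathbf{x}[0]$; hence the decoder recovers the correct $x_1[0],\ldots,x_n[0]$. The routine bookkeeping I would not belabor is that the differences $\boldsymbol{\zeta}'-\boldsymbol{\zeta}$ indeed assemble into one vector $\boldsymbol{\eta}$ multiplying $\mathcal{M}_{i,L}^{\mathcal{I}}$, and the explicit form of the left inverse guaranteed by the full-column-rank property (a).
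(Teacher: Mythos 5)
Your proposal is correct, but it takes a different route from the one the paper presents. The paper does not prove identifiability at all: its ``proof'' is the constructive decoder of \eqref{eqn:alg1}--\eqref{eqn:alg2}, namely pick a basis $\mathcal{N}_{i,L}$ for the left null space of the invertibility matrix, annihilate the fault term in \eqref{eqn:vesfw}, and left-invert $\mathcal{N}_{i,L}\mathcal{O}_{i,L}$. The rank hypothesis enters there exactly through your facts (a) and (b): if $\mathcal{N}_{i,L}\mathcal{O}_{i,L}\mathbf{z}=0$ then $\mathcal{O}_{i,L}\mathbf{z}\in \mathrm{col}\left(\mathcal{M}_{i,L}^{\mathcal{I}}\right)\cap \mathrm{col}\left(\mathcal{O}_{i,L}\right)=\{0\}$, so $\mathbf{z}=0$ and the left inverse in \eqref{eqn:alg1} exists. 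What you do differently is to prove uniqueness first --- two explanations with fault supports of size at most $f$ have a combined support of size at most $2f$, so the hypothesis quantified over $2f$-sets forces them to agree on $\mathbf{x}[0]$ --- and then decode by exhaustive search over the $\binom{n}{f}$ candidate supports. Your route actually buys something the paper's sketch quietly needs: the true fault set, hence $\mathcal{B}$ and $\mathcal{M}_{i,L}$, is unknown to $v_i$, so the one-shot projection in \eqref{eqn:alg1} is only well defined per candidate support, and it is precisely your uniqueness lemma that guarantees every consistent candidate returns the same $\mathbf{x}[0]$; conversely, the paper's approach buys an explicit linear-algebraic recovery formula in place of your abstract ``solve the consistent system'' step (indeed your search decoder would naturally use the paper's $\mathcal{P}_{i,L}$ construction for each candidate support). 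One small point to tighten: the hypothesis ranges over sets $\mathcal{I}$ of exactly $2f$ vehicles, so when $|\mathcal{F}\cup\mathcal{F}'|<2f$ you should pad the union to a $2f$-set $\mathcal{I}$ and use $\mathrm{col}\left(\mathcal{M}_{i,L}^{\mathcal{F}\cup\mathcal{F}'}\right)\subseteq \mathrm{col}\left(\mathcal{M}_{i,L}^{\mathcal{I}}\right)$; the same remark covers the size-$f$ candidate sets in your final decoding step. This is immediate, but worth a line.
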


The proof of the above theorem provides a procedure for a vehicle in the network, like $v_i$, to recover the initial conditions of all vehicles in the network, as described bellow. 
\begin{itemize}
    \item First $v_i$ should find a matrix $\mathcal{N}_{i,L}$ whose rows form a basis for the left null space of the invertability matrix $\mathcal{M}_{i,L}$. After finding such matrix, by left multiplying it to $\mathcal{M}_{i,L}\boldsymbol{\zeta}[0:L-1]$ in \eqref{eqn:vesfw}, the second term in that equation is eliminated and we have $\mathcal{N}_{i,L}y_i[0:L-1]=\mathcal{N}_{i,L}\mathcal{O}_{i,L}$. 
    
    \item The next step is to define 
    \begin{equation}
    \mathcal{P}_{i,L}=\left(\mathcal{N}_{i,L}\mathcal{O}_{i,L}\right)^{\dagger}\mathcal{N}_{i,L},
    \label{eqn:alg1}
    \end{equation}
    where $(.)^{\dagger}$ is the left inverse of a matrix. Then vehicle $v_i$ can recover the initial conditions based on its measurements after $L$ time steps, as follows 
    \begin{equation}
    \mathcal{P}_{i,L}y_i[0:L-1]=\mathbf{x}[0].
    \label{eqn:alg2}
    \end{equation}
\end{itemize}

Theorem \ref{thm:conditionnn} provides a procedure for reconstructing initial conditions; however, it does not provide a graph-theoretic condition for state estimation. For this, the following definition is provided. 
\begin{definition}
A vertex-cut in a graph $\mathcal{G}=\{\mathcal{V}, \mathcal{E}\}$ is a subset $\mathcal{S} \subset \mathcal{V}$ such that removing the vertices in $\mathcal{S}$ (and the associated edges) from the graph causes the remaining graph to be disconnected. More specifically, a $(j, i)$-cut in a graph is a subset $\mathcal{S}_{ij}\subset \mathcal{V}$ such that removing the vertices in $\mathcal{S}_{ij}$ (and the associated edges) from the graph causes the graph to have no paths from vertex $v_j$ to vertex $v_i$.  Let $\kappa_{ij}$ denote the size of the smallest $(j, i)$-cut between any two vertices $v_j$ and $v_i$. Then graph $\mathcal{G}$ is said to be $k$-connected if $\kappa_{ij}=k$.
\end{definition}

\begin{theorem}[\cite{Sundaram2011}]
Let  $\mathcal{G}(\mathcal{V},\mathcal{E})$ be a fixed graph and let $f$ denote the maximum number of faulty vehicles that are to be tolerated in the network. Then, regardless of the actions of the faulty vehicles, $v_i$ can uniquely determine all of the initial values in the network via a linear iterative strategy if $\mathcal{G}$ is at least  $2f +1$ connected.
\label{thm:robdeist}
\end{theorem}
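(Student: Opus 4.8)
The plan is to derive the graph-theoretic $(2f+1)$-connectivity condition from the algebraic rank condition already established in Theorem \ref{thm:conditionnn}. Since that theorem guarantees recovery whenever $\text{rank}([\mathcal{O}_{i,L} \quad \mathcal{M}_{i,L}^{\mathcal{I}}]) = n + \text{rank}(\mathcal{M}_{i,L}^{\mathcal{I}})$ holds for every candidate set $\mathcal{I}$ of $2f$ faulty vehicles, it suffices to show that $(2f+1)$-connectivity forces this rank identity for almost every weight matrix $\mathcal{W}$.

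First I would reinterpret the rank identity. Using $\text{rank}([\mathcal{O} \quad \mathcal{M}]) = \text{rank}(\mathcal{O}) + \text{rank}(\mathcal{M}) - \dim(\text{col}(\mathcal{O}) \cap \text{col}(\mathcal{M}))$ together with the fact that $\mathcal{O}_{i,L}$ has only $n$ columns, the identity is equivalent to the conjunction of two conditions: (i) $\mathcal{O}_{i,L}$ has full column rank $n$, and (ii) $\text{col}(\mathcal{O}_{i,L}) \cap \text{col}(\mathcal{M}_{i,L}^{\mathcal{I}}) = \{0\}$. A violation therefore means there is a nonzero initial state $\mathbf{x}[0]$ and an injection sequence $\boldsymbol{\zeta}$ supported on the nodes of $\mathcal{I}$ producing identical outputs at $v_i$, i.e. $\mathcal{O}_{i,L}\mathbf{x}[0] = \mathcal{M}_{i,L}^{\mathcal{I}}\boldsymbol{\zeta}$; splitting $\mathcal{I}$ into two halves of size $f$ shows that this is exactly the indistinguishability of two distinct state-plus-$f$-fault scenarios that must be ruled out for error correction.

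Next I would convert this into a combinatorial statement. The entries of $\mathcal{W}$, and hence of $\mathcal{O}_{i,L}$ and $\mathcal{M}_{i,L}^{\mathcal{I}}$, have a zero/nonzero pattern dictated entirely by $\mathcal{G}$, so these are structured matrices and their ranks equal their generic (maximal) values outside a set of weights of Lebesgue measure zero, as in the footnote to Theorem \ref{thm:leb}. The central lemma, drawn from structured-system theory, is that the number of initial values $v_i$ can recover in the presence of injections through $\mathcal{I}$ is governed by vertex-disjoint paths: the contribution of $v_j$'s initial value is observable and cannot be masked at $v_i$ precisely when there exists a path from $v_j$ to $v_i$ avoiding every node in $\mathcal{I}$. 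Here Menger's theorem supplies the bridge: $(2f+1)$-connectivity means that at least $2f+1$ vertex-disjoint paths join any $v_j$ to $v_i$, and deleting the $2f$ vertices of $\mathcal{I}$ destroys at most $2f$ of them, leaving at least one clean path. This clean path furnishes a direction in $\text{row}(\mathcal{O}_{i,L})$ not reachable through the columns of $\mathcal{M}_{i,L}^{\mathcal{I}}$, yielding conditions (i) and (ii) simultaneously for every choice of $\mathcal{I}$.

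I expect the generic-rank-equals-vertex-disjoint-paths step to be the main obstacle. Translating the linear-algebraic trivial-intersection requirement into the existence of a fault-avoiding path requires carefully tracking how the structured pattern of $\mathcal{M}_{i,L}^{\mathcal{I}}$ encodes which state contributions can be absorbed by injections at $\mathcal{I}$, and then invoking the appropriate structured-system rank result (analogous to the one underlying Theorem \ref{thm:leb}) to certify that the exceptional weight set has measure zero. Once this combinatorial characterization is in hand, the reduction from $(2f+1)$-connectivity to the rank identity, and thence to unique recovery via Theorem \ref{thm:conditionnn}, is immediate.
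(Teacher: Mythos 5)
Your overall architecture is the right one, and it matches how the result is actually established in \cite{Sundaram2011} (the paper you are annotating gives no proof of its own; it imports the theorem by citation): reduce to the rank identity of Theorem \ref{thm:conditionnn}, observe that the identity is equivalent to $\mathcal{O}_{i,L}$ having full column rank together with ${\rm col}(\mathcal{O}_{i,L}) \cap {\rm col}(\mathcal{M}_{i,L}^{\mathcal{I}}) = \{0\}$, interpret a violation as indistinguishability of two state-plus-$f$-fault scenarios, and then pass to generic ranks of structured matrices so that graph data decide everything up to a measure-zero set of weights. All of that is sound.

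The genuine gap is your central lemma: it is false that a single path from $v_j$ to $v_i$ avoiding the $2f$ vertices of $\mathcal{I}$ certifies that $x_j[0]$ cannot be masked. The injections at $\mathcal{I}$ are arbitrary time-varying signals, and they can generically cancel the signature of $x_j[0]$ \emph{along} the clean path, because everything $v_i$ sees is funneled through the measured set $\{v_i\}\cup\mathcal{N}_i$. Concretely, take $v_j$ joined to $v_i$ only through a bottleneck vertex $a$, and attach the two fault nodes of $\mathcal{I}$ to $a$ as well: the path $v_j\!-\!a\!-\!v_i$ avoids $\mathcal{I}$, yet $v_i$ measures only $x_i[k]$ and $x_a[k]$, and the faulty nodes' free parameters (their initial values and $\boldsymbol{\zeta}[k]$) suffice to reproduce the effect of a perturbed $x_j[0]$ in those two channels — your ``direction in the row space not reachable through ${\rm col}(\mathcal{M}_{i,L}^{\mathcal{I}})$'' simply does not exist here. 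The correct combinatorial certificate, and the one used in \cite{Sundaram2011} via van der Woude's theorem that the generic rank of a structured transfer matrix equals the maximum size of a vertex-disjoint linking, is stronger: one needs $2f+1$ vertex-disjoint paths from the \emph{set} $\mathcal{I}\cup\{v_j\}$ into $v_i$'s observation set $\{v_i\}\cup\mathcal{N}_i$ (a fan/linking version of Menger, which $(2f{+}1)$-connectivity does deliver, since it also forces $d_i \geq 2f+1$). In my bottleneck example this linking condition correctly fails (three sources, but only two disjoint entry points into the observation set), whereas your clean-path test incorrectly passes. So the Menger step you invoke is the wrong Menger step: disjoint paths from $v_j$ to $v_i$ with $2f$ deletions give ordinary generic observability (the fault-free Theorem \ref{thm:leb} situation), but strong observability against unknown inputs requires the linking from faults-plus-state-vertex jointly, and that is precisely the step you flagged as the obstacle and then resolved with an insufficient condition.
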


In Section \ref{sec:Results} we in introduce $k$-nearest neighbor vehicle platoons as examples of $k$-connected graphs, which are compatible with the physics of vehicle communication networks.

\section{Fault Detection and Correction} \label{sec:fdetcor}

In the previous section, we discuss the distribution algorithms that each vehicle can perform to obtain the required data to be used for speed fault detection and reconstruction. This section pertains the latter steps, in which vehicle $v_i$ applies the data it gathered to find out if there exists a failure in its own speed measurement (or estimation).

\subsection{Fault Detection}

After finding the values for other vehicles, based on the overall procedure introduced in Section  \ref{sec:ffwsgf} and shown in Fig. \ref{fig:2nndddef}, vehicle $v_i$ does the following error computation:
    \begin{multline}
    e_i^j=\kappa_1\left(u_{ij}-\int a_{ij} dt \right) 
    +\kappa_2\left(u_{ij}-\frac{d(p_{ij})}{dt}  \right)
    \label{eqn:partial}
\end{multline}
where $\kappa_1, \kappa_2>0$ are some design constants and $p_{ij}=p_i(t)-p_j(t)$, $u_{ij}=u_i(t)-u_j(t)$ and $a_{ij}(t)=a_i(t)-a_j(t)$ are relative distance, velocity and acceleration of vehicles $v_i$ and $v_j$, which are calculated based on absolute values obtained from the distributed calculation algorithm discussed in the previous section. We should define an (adaptive) threshold value $e_{th}$ such that if $|e_i^j| > e_{th}$ then it refers to the existence of some fault. For each vehicle $v_i$, based on the error parameter in \eqref{eqn:partial} for all vehicles $v_j$ in the network, we propose the following decision rule:
\begin{tcolorbox}
If $|e_i^j|>e_{th}$ for only a specific vehicle $v_j \in \mathcal{V}$, then the speed $u_j$ is faulty. If $|e_i^j|>e_{th}$ for (almost) all $v_j \in \mathcal{V}$, then the speed $u_i$ is faulty.  
\end{tcolorbox}

\subsection{Speed Correction}

After diagnosing a fault in the speed measurement of vehicle $v_i$, the final step is to make a correction. This step also takes advantage of the information that $v_i$ has obtained from the rest of the vehicles in the network. More formally, $v_i$ calculates speed $u_i$ from one of the following relations\footnote{Since integration of sensory measurement is prone to drifting effects, because of unavoidable signals bias, it would be better to use the signal derivatives.}
\begin{align}
u_i^j(t)&=u_j(t)+\frac{d(p_{ij})}{dt}, \nonumber \\
u_i^j(t)&=u_j(t)+\int a_{ij} dt,
\label{eqn:correctionvi}
\end{align}
for each vehicle $v_j\in \mathcal{V}$, where $u_i^j(t)$ is the speed of vehicle $v_i$ from the eye of vehicle $v_j$. Here $u_i^j(t)$ is called the opinion of vehicle $v_j$ about the velocity $u_i$. After calculating the opinions of all vehicles about $u_i$, i.r., $u_i^j$ for $v_j\in \mathcal{V}$, vehicle $v_i$ can apply a majority voting rule \cite{Peleg} or use some sensor fusion techniques to increase the reliability of the obtained values. Such post process methods on received signals (opinions) are necessary since these signals are prone to noises. One of these techniques is a distributed consensus algorithm to reduce the error of the resulting quantity, as discussed in \cite{BoydLall}. However, distributed consensus algorithms should be run $n$ times ($n$ is the size of the network), which produces huge computational complexity for large vehicle networks. Instead, here we re-frame the algorithm in \cite{BoydLall} in the following form. In this case, instead of doing distributed consensus, vehicle $v_i$ does a local averaging from the opinions it has computed about its speed. More specifically, vehicle $v_i$ calculates $u_i^j$ for all $v_j\in \mathcal{V}$, and then does the averaging as 
\begin{equation}
\bar{u}_i=\frac{1}{n}\sum _{j=1, j\neq i}^n u_i^j,
\label{eqn:averagingg}
\end{equation}
where $\bar{u}_i$ is called the average opinion of vehicles in the network about speed $u_i$. At the end, $v_i$ replaces its current velocity $u_i$ with the resulting average opinion $\bar{u}_i$. 

\begin{remark}
In an ideal case, where all noisy opinions $u_i^j$ are random variables with the same mean $\mu$ and variance $\sigma^2$ and are independent and identically distributed, it is well-known that the variance of the average signal $\bar{u}_i$ scales with $\frac{1}{n}$, where $n$ is the size of the network. This shows the advantage of the resulting average opinion $\bar{u}_i$ compared to each individual opinion $u_i^j$. 
\end{remark}

The overall procedure of distributed calculation and fault detection and correction is summarized in Algorithm 1. 

\begin{algorithm}[t]
\footnotesize{\caption{\texttt{Distributed Speed Fault Diagnosis/Correction for Vehicle $v_i$.}}}
\label{alg:Opin_Dyn}
\texttt{// Inputs:} $p_i$, $u_i$, $a_i$ and $n$ (network size)\\
\texttt{\textbf{Distributed Observability Matrix Computation:}} \\
Vehicle $v_i$ runs \eqref{eqn:1wd} for the $i$-th row of (stable) $\mathcal{W}$, and for $n-d_{min}$ time steps to calculate the observability matrix, using \eqref{eqn:obsdist}.\\
\texttt{\textbf{Distributed Calculation:}} \\
Vehicle $v_i$ runs \eqref{eqn:1wd}  for at least $L=\epsilon_i$ and at most $L=|\mathcal{S}_i|-d_i$ time steps
(If $x_i[k]$ does not go to zero, then there is at least a faulty vehicle in the network). \\
Vehicle $v_i$ finds matrix $\mathcal{P}_{i,L}$ via  \eqref{eqn:alg1} and calculates initial conditions vector $\mathbf{x}[0]$ using \eqref{eqn:alg2}.\\
\texttt{\textbf{Fault Diagnosis:}}\\
Vehicle $v_i$ calculated \eqref{eqn:partial}. If $|e_i^j|>e_{th}$, for some $e_{th}$, for only a specific vehicle $v_j \in \mathcal{V}$, then speed $u_j$ is faulty. If $|e_i^j|>e_{th}$ for (almost) all vehicles $v_j \in \mathcal{V}$, then speed $u_i$ is faulty.,\\
\texttt{\textbf{Fault Correction:}}\\
Vehicle $v_i$ uses \eqref{eqn:correctionvi} for all $v_j\in \mathcal{V}\setminus \{v_i\}$  to correct its own speed and does the local averaging \eqref{eqn:averagingg} to reduce the signal variance.\\
\texttt{// Output:} Reliable  and corrected speed $\bar{u}_i$.\\
\end{algorithm}

\section{Simulations and Numerical Analysis of Network Topology} \label{sec:Results}
In this section, some illustrative examples are demonstrated to confirm and clarify  Theorems \ref{thm:leb}, \ref{thm:conditionnn} and \ref{thm:robdeist} as well as fault diagnosis/correction algorithms mentioned in the previous section. First, the following proposition is presented.

\begin{proposition}
A $k$-nearest neighbor platoon, $\mathcal{P}(n,k)$, is a $k$-connected graph.
\label{prop:kconnectedprop}
\end{proposition}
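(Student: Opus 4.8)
The plan is to pin the connectivity of $\mathcal{P}(n,k)$ to exactly $k$ by proving two matching bounds: an easy upper bound exhibiting a vertex cut of size $k$, and a lower bound showing that deleting any $k-1$ vertices cannot disconnect the platoon. First I would fix notation, labelling the vehicles $v_1, \ldots, v_n$ in their spatial order and recording that, by the definition of a $k$-nearest neighbor platoon, $(v_i, v_j) \in \mathcal{E}$ exactly when $0 < |i-j| \le k$. I would also dispatch the degenerate regime $n \le k+1$ separately, where $\mathcal{P}(n,k)$ is the complete graph $K_n$ and is trivially $(n-1)$-connected, and assume $n \ge k+2$ in the main argument.

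For the upper bound I would exhibit an explicit small cut. The endpoint $v_1$ is adjacent to exactly $v_2, \ldots, v_{k+1}$, so its degree is $k$; removing this neighbor set isolates $v_1$ from the rest of the network, which shows $\kappa_{1j} \le k$ for any $v_j$ with $j \ge k+2$. Hence the smallest $(j,i)$-cut in the graph has size at most $k$.

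For the lower bound I would show that removing any vertex set $\mathcal{S}$ with $|\mathcal{S}| \le k-1$ leaves the remaining graph connected. Letting the surviving indices be $i_1 < i_2 < \cdots < i_m$, the key step is a gap count: every index strictly between two consecutive survivors $i_t$ and $i_{t+1}$ belongs to $\mathcal{S}$, so $i_{t+1} - i_t - 1 \le |\mathcal{S}| \le k-1$, i.e. $i_{t+1} - i_t \le k$. By the edge rule this makes $v_{i_t}$ and $v_{i_{t+1}}$ adjacent, so the survivors link into a single path $v_{i_1} - v_{i_2} - \cdots - v_{i_m}$ and the graph stays connected. Thus no cut of size at most $k-1$ exists, so the smallest $(j,i)$-cut has size at least $k$. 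Combining the two bounds gives that the smallest cut is exactly $k$, which is precisely the definition of $k$-connectivity.

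The upper bound and the adjacency bookkeeping are routine; the one step requiring care — and the main obstacle — is making the gap count airtight so that it simultaneously absorbs the boundary vehicles near $v_1$ and $v_n$ (whose neighborhoods are truncated) and the degenerate regime $n \le k+1$, ensuring the claimed equality $\kappa_{ij} = k$ is not misstated in those cases.
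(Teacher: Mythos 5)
Your proof is correct, but it takes a genuinely different route from the paper's. The paper's proof is a two-line appeal to Menger's theorem: it asserts that there exist exactly $k$ vertex-disjoint paths between every pair of vehicles and concludes that the minimum $(j,i)$-cut has size $k$. You instead establish the two bounds on the connectivity directly and elementarily: the neighborhood of the endpoint $v_1$ is an explicit cut of size $k$ (upper bound), and your gap count shows that after deleting any $k-1$ vertices, consecutive surviving indices differ by at most $k$, hence are adjacent, so the survivors form a spanning path of the remaining graph (lower bound). What your approach buys is twofold. First, it is self-contained, needing no extremal min-cut/max-flow machinery. Second, it is actually more careful than the paper's argument: the claim that there are \emph{exactly} $k$ vertex-disjoint paths between \emph{every} pair is false for interior pairs --- in $\mathcal{P}(5,2)$, for instance, $v_2$ and $v_3$ are joined by three internally disjoint paths ($v_2$--$v_3$, $v_2$--$v_1$--$v_3$, $v_2$--$v_4$--$v_3$) even though $k=2$ --- so only ``at least $k$'' holds in general, with equality forced by the endpoint degrees; your upper-bound-plus-lower-bound structure is precisely the repair of this imprecision, and it matches the paper's (nonstandard) definition of $k$-connectivity as $\kappa_{ij}=k$. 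You are also right to flag the degenerate regime: for $n\le k$ the platoon is $K_n$ and only $(n-1)$-connected, so the proposition implicitly assumes $n\ge k+1$ (your main argument uses $n\ge k+2$; the case $n=k+1$ gives $K_{k+1}$, which is still $k$-connected), a caveat the paper's proof silently glosses over.
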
 
\begin{proof}
It is clear that in a $k$-nearest neighbor platoon structure, there exists exactly $k$ vertex disjoint paths\footnote{Vertex disjoint paths in a graph are paths which do not have  vertex in common (other than the first and last vertices).} between each couple of vertices $v_i, v_j \in \mathcal{V}$. Hence, according to Menger's theorem \cite{Aharoni}, the  size of the minimum vertex cut between $v_i$ and $v_j$ in a $k$-nearest neighbor platoon is $k$, which proves the claim.
\end{proof}
Based on the above proposition,  we can use $\mathcal{P}(n,k)$ as a $k$- connected graph in the simulations. According to Proposition \ref{prop:kconnectedprop} and based on the conditions mentioned in Theorems  \ref{thm:leb} and \ref{thm:robdeist}, the following figure schematically shows the vehicle network connectivity requirements for (robust) distributed calculation algorithms.

\begin{figure}[h!]
\centering
\includegraphics[width=0.75\linewidth]{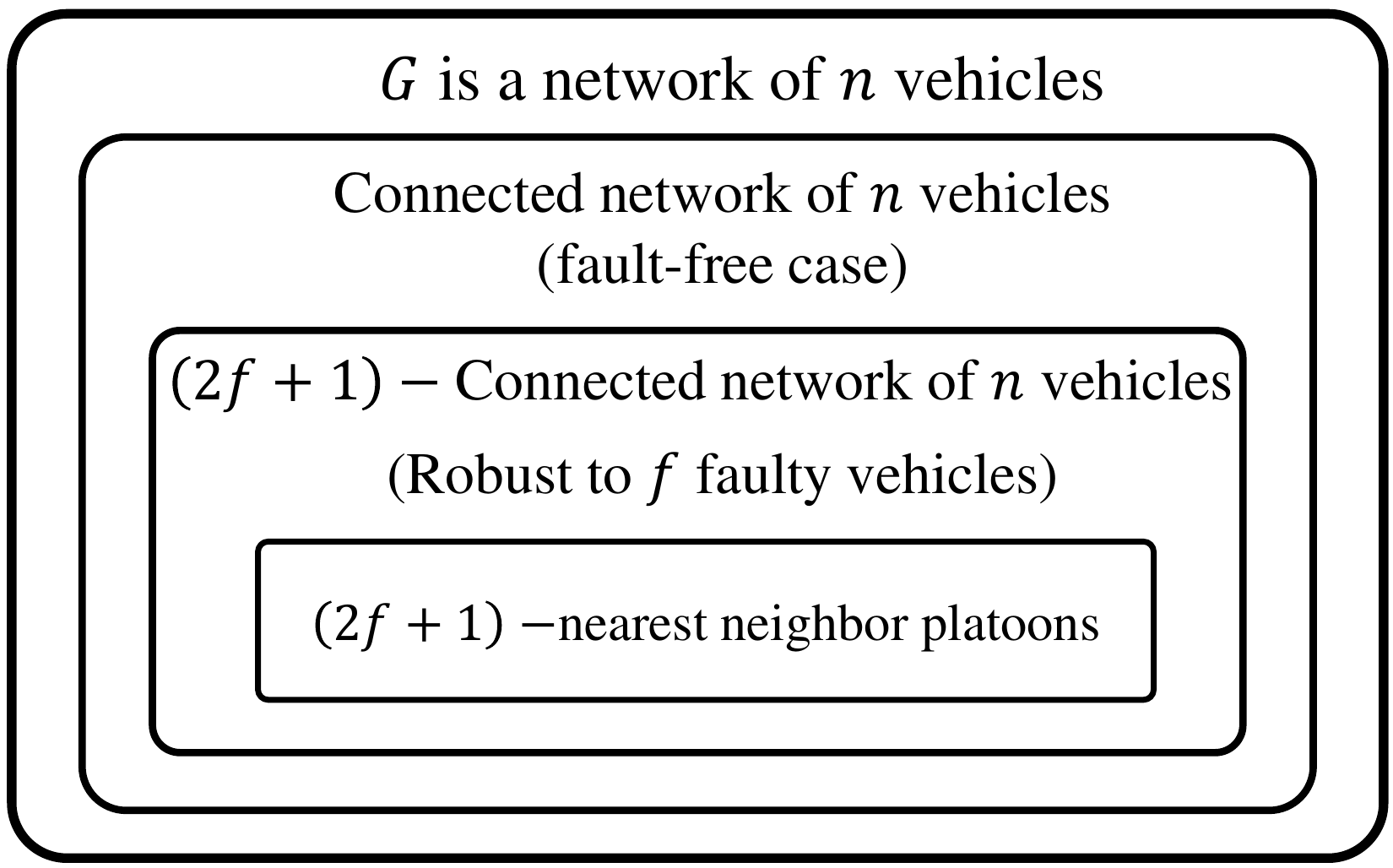}
\caption{Venn diagram of the vehicle network connectivity required for distributed calculation, based on Theorems  \ref{thm:leb}, \ref{thm:robdeist}.}
\label{fig:2nnddfuef}
\end{figure}

\subsection{Fault-Free Case}
For the case of fault free estimation, consider a 1 nearest-neighbour platoon (simple path graph) comprised of eight vehicles, Fig. \ref{fig:2nsdfndef} (top), in which there is no faulty vehicle in the network. 
\begin{figure}[h!]
\centering
\includegraphics[width=1\linewidth]{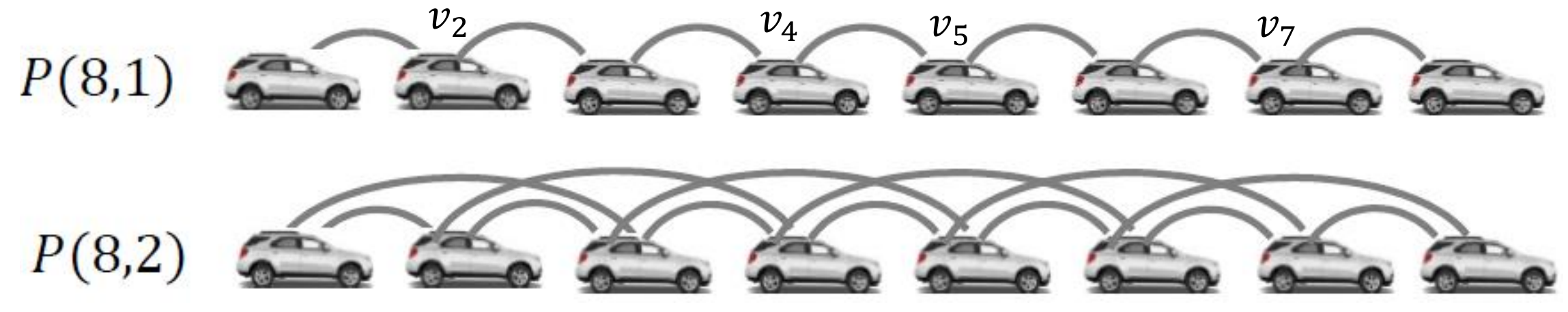}
\caption{Fault free $1$-nearest neighbor platoon of eight vehicles (top) and $2$-nearest neighbor platoon (bottom).}
\label{fig:2nsdfndef}
\end{figure}
The Euclidean norm of the error of the calculated quantities by vehicles 2, 4, 5, and 7 are depicted in Fig. \ref{fig:knn1}. According to this figure, all four vehicles estimate the states in finite time. However, vehicles 4 and 5 reached to the correct values in four time steps, while vehicles 2 and 7 reach in six time steps. These two values for time steps show that in this network topology, the lower bound for time steps mentioned in Theorem \ref{thm:leb} is achieved, which is equal to the longest distance of each of vehicles 2, 4, 5, and 7 to any other vehicle in the network. 
\begin{figure}[h!]
\centering
\includegraphics[width=0.95\linewidth]{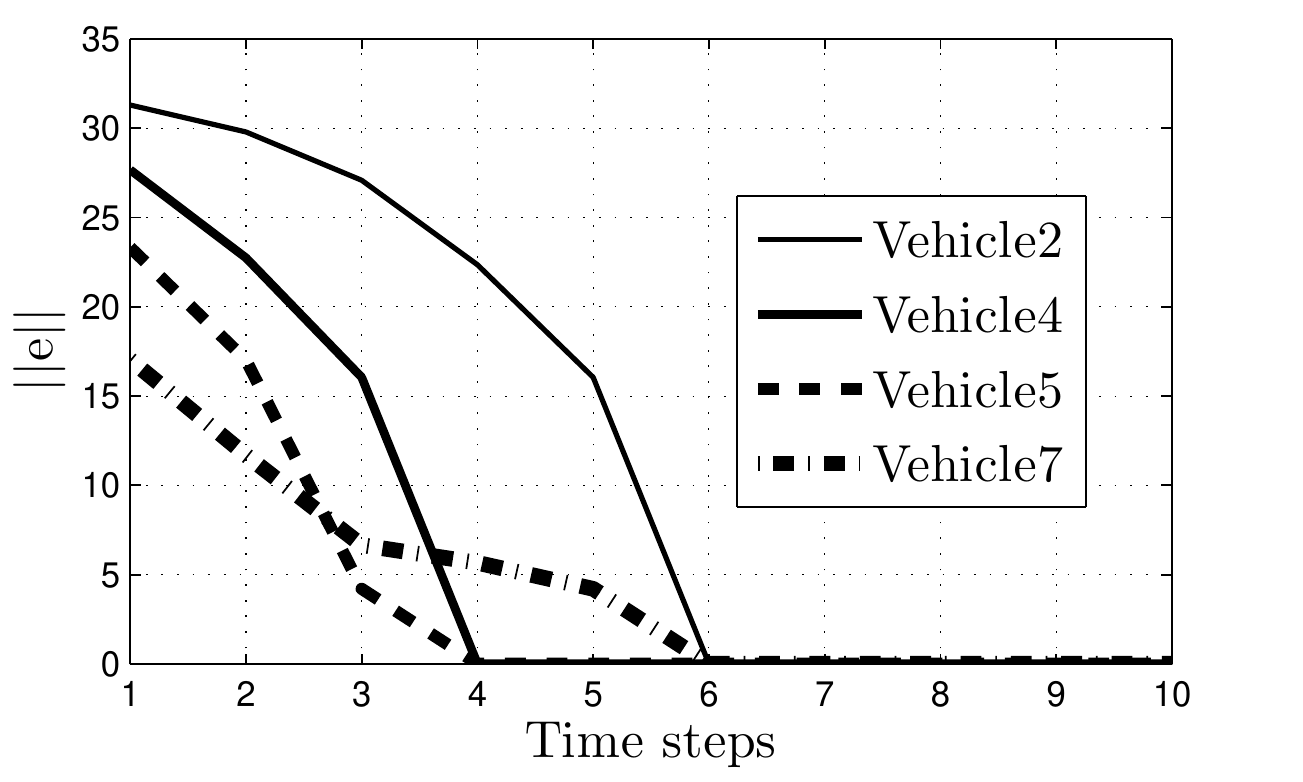}
\caption{Distributed calculation error for four vehicles in an 1 nearest-neighbour platoon, without fault.}
\label{fig:knn1}
\end{figure}
In Fig. \ref{fig:knn2} the connectivity of the network has increased by using a 2-nearest neighbor network of eight vehicles. In this case, vehicles 2 and 7 are two hops away from the heads of the network and vehicles 4 and 5 are only one hop away. Hence, increasing the connectivity of the network results in faster distributed calculation. 
\begin{figure}[h!]
\centering
\includegraphics[width=0.95\linewidth]{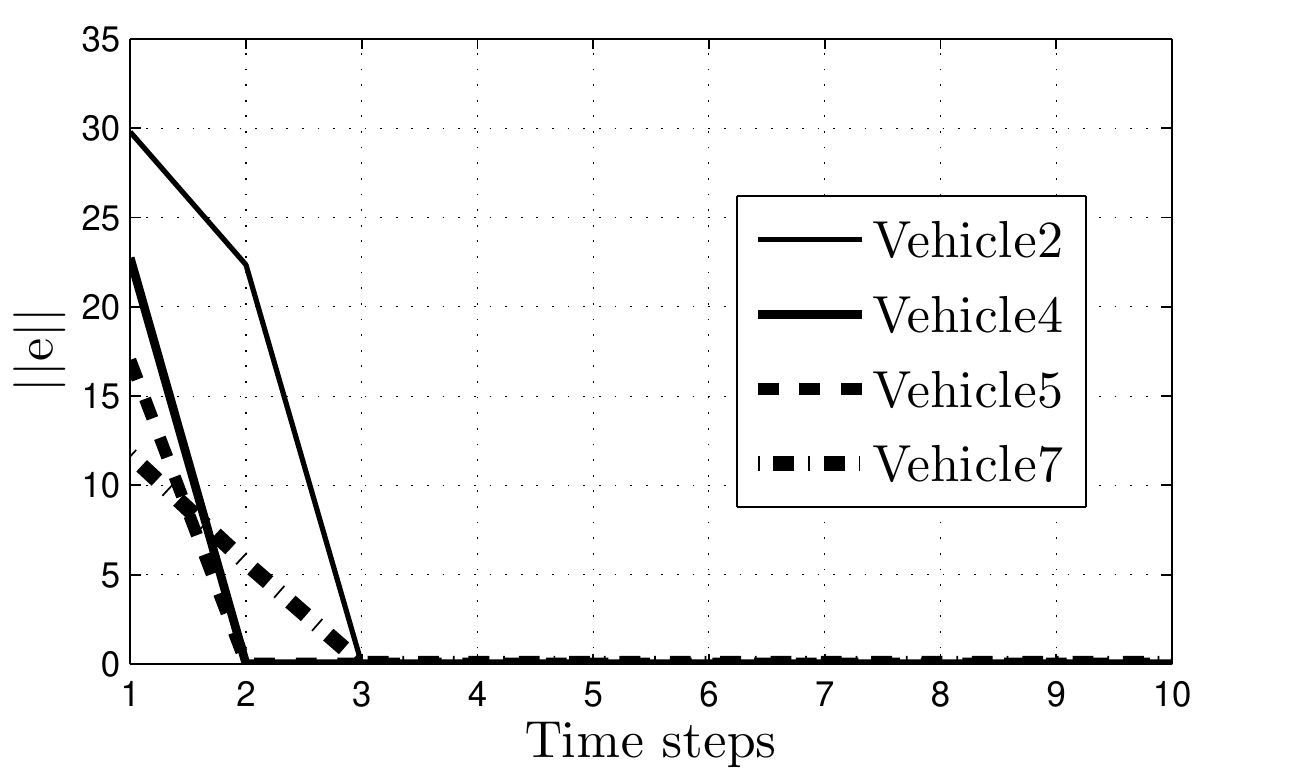}
\caption{Distributed calculation error for four vehicles in an 2 nearest-neighbour platoon, without fault.}
\label{fig:knn2}
\end{figure}

\subsection{Fault-Prone Case}

In another scenario, we assume that there is a vehicle which fails to update its state correctly in the distributed calculation setting. Based on Theorem \ref{thm:robdeist}, the network should satisfy a stronger connectivity condition in order to be able to tolerate the fault. In particular, the network should be at least 3-connected such that it guarantees to tolerate one faulty vehicle. For this, we choose a larger network, which is a 3-nearest neighbor platoon of 20 vehicles.  Fig. \ref{fig:knn3} shows the response of the system (Euclidean norm of states over time) for both fault-free (top) and faulty (bottom) cases. As matrix $\mathcal{W}$ used in \eqref{eqn:f2} here is chosen to be stable, having states go to some number other than zero means there is an extra input (fault) in the system.

\begin{figure}[h!]
\centering
\includegraphics[width=0.95\linewidth]{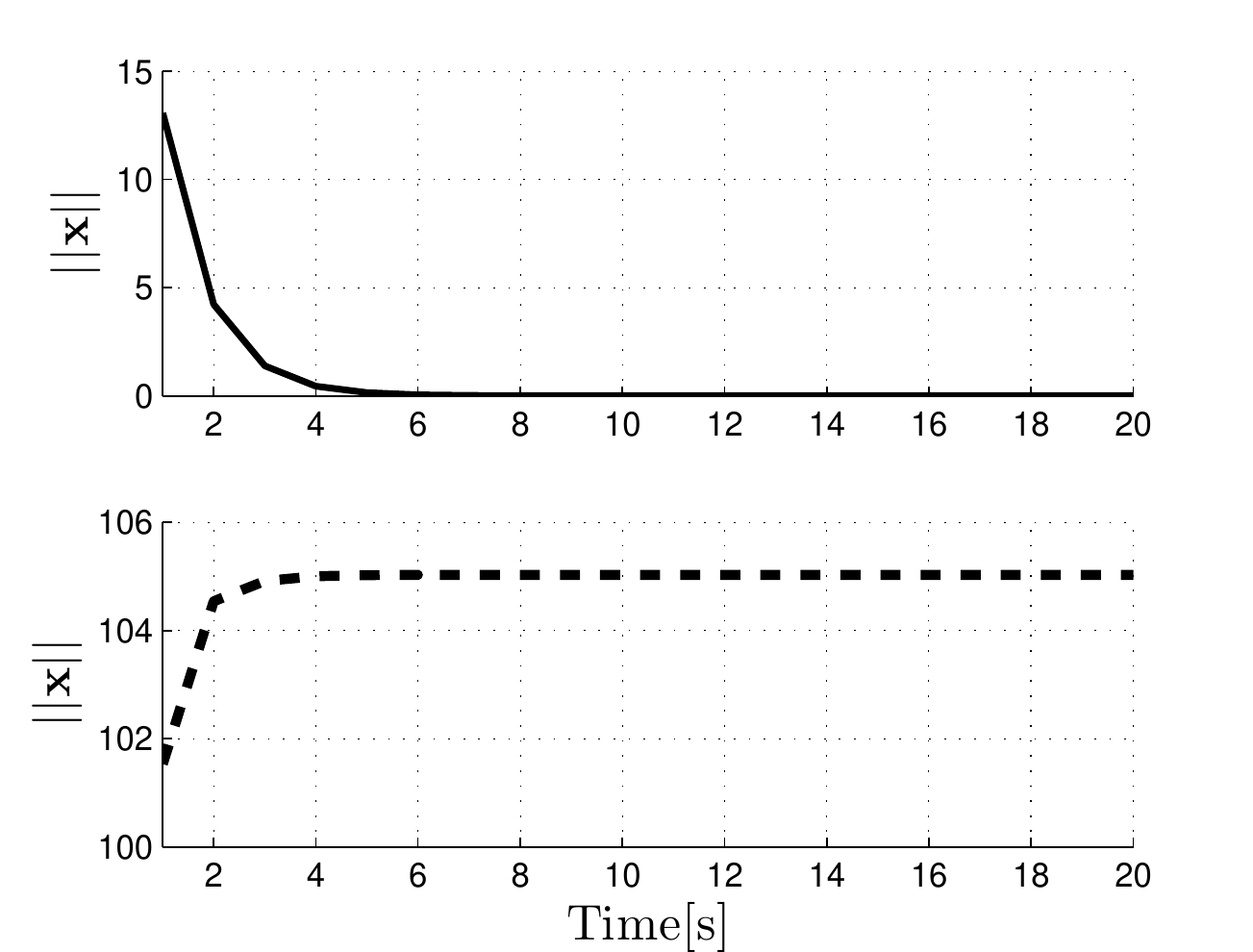}
\caption{Euclidean norm of states in dynamics \eqref{eqn:f2} on $\mathcal{P}(20,3)$, with no faulty vehicle (top), and with a faulty vehicle (bottom).}
\label{fig:knn3}
\end{figure}
Fig. \ref{fig:knn4} shows the  Euclidean norm of the error of the calculated quantities by one of the vehicles in the network $v_i$ in the case where one of the vehicles injects a faulty input to its updating rule. As it is shown in the figure, for the cases of 2 and 3- nearest neighbor platoons, $v_i$ manages to estimate the states, despite of the existence of a faulty vehicle in the network. It should be noted that although the 2-nearest neighbor platoon does not satisfy 3-connectivity condition, the estimation works properly. It is due to the fact that in this case, the location of the faulty vehicle and the faulty value it injects does not mislead $v_i$ to calculate the functions. As it is inferred form  Fig. \ref{fig:knn4}, a 1-nearest neighbor platoon can not tolerate a faulty vehicle in the network anymore. 

\begin{figure}[h!] 
\centering
\includegraphics[width=0.95\linewidth]{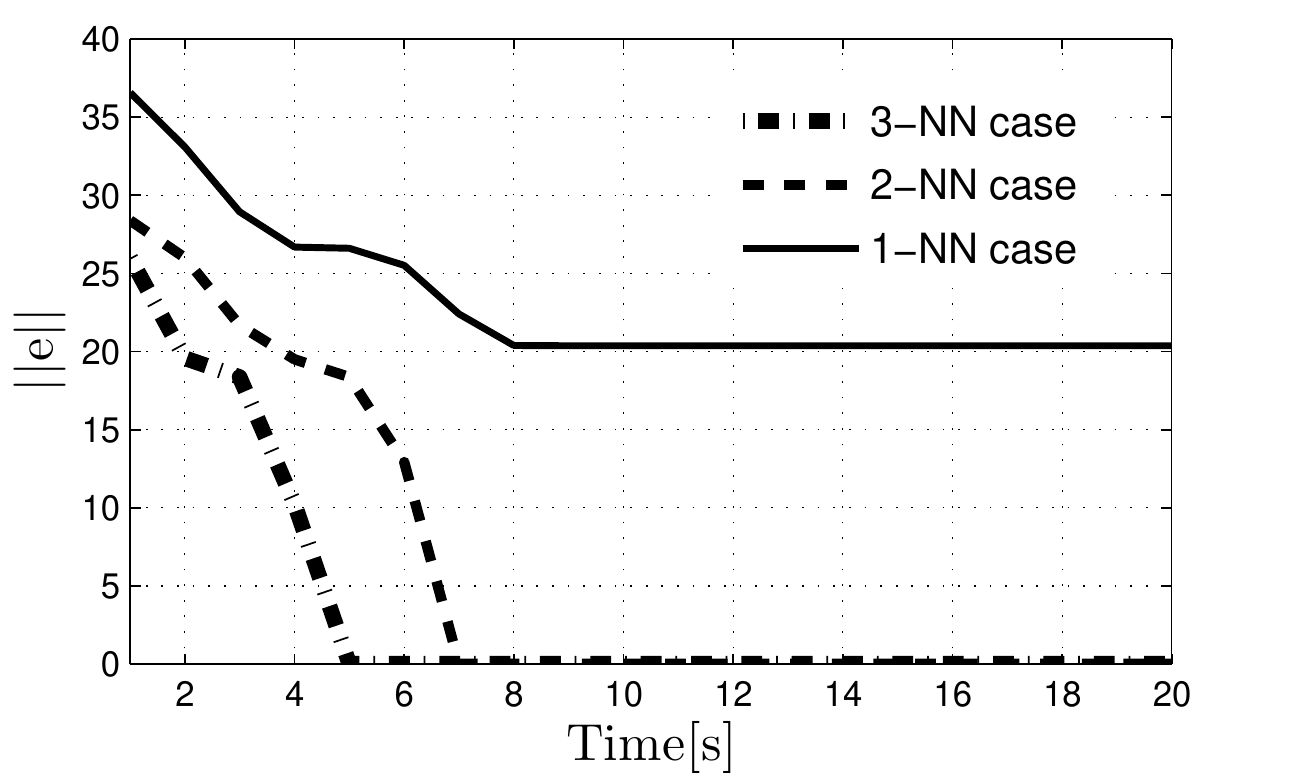}
\caption{Distributed calculation error for a vehicle in an 1, 2 and 3 nearest-neighbour platoons, with a single fault in vehicle 3.}
\label{fig:knn4}
\end{figure}
\begin{remark}
It should be noted that in distributed calculation algorithms mentioned in this paper, the magnitude of the calculation error $||e||$ is a decreasing function of time steps. This introduces an important feature of this algorithm, which says that even if vehicle $v_i$ is not willing to wait until the calculation error becomes zero and wants to pick the calculated values at some time step (before the error settles down to zero), it is sure that those values are closer to the real parameters than the initial guesses. This is the main advantage of this method, compared to network flooding algorithms \cite{Intanagonwiwat, Rahman}, in which vehicle $v_i$ must wait until the true value from other vehicles arrive. 
\end{remark}

\subsection{Velocity Fault Diagnostics}

After performing the distributed calculation algorithm, vehicle $v_i$ has enough information from all other vehicles in the network to examine the correctness of it own speed. More precisely, suppose that vehicle $v_i$ wants to find if there exists a failure in the velocity measurement (estimation) of itself or (possibly) any other vehicle in the network. Based on the algorithm mentioned in Section \ref{sec:fdetcor}, vehicle $v_i$ adopts an strategy  to calculate a residual function $\bar{e}_i^j$ for all $v_j\in \mathcal{V}\setminus \{v_i\}$. If $\bar{e}_i^j$ is nonzero (or above a certain threshold) for only a specific vehicle $v_j \in \mathcal{V}$, then the velocity $u_j$ is faulty. If $\bar{e}_i^j$ is nonzero for (almost) all vehicles $v_j \in \mathcal{V}$, then the velocity $u_i$ is faulty.  Fig. \ref{fig:kndsfasen2} shows that the residual signal, computed by vehicle $v_1$ in $\mathcal{P}(8,2)$, for vehicle $v_3$ (dashed line) is significantly larger than those of the rest of vehicles in the network. Hence, the velocity of vehicle $v_3$, which is $u_3$, is faulty. In this case, $v_3$ should calculate the opinions of other seven vehicles in the network about its own speed, i.e., $u_3^j$ for all $v_j\in \mathcal{V}$. Vehicle $v_3$ has the required materials for doing this calculation, which are all $p_i, u_i$ and $a_i$ for all other vehicles,  as it has gathered from the distributed calculation algorithm. After calculating all opinions $u_3^j$, $v_3$ does an averaging between the opinions and replaces its current velocity $u_3$ with the resulting velocity $\bar{u}_3$.

\begin{figure}[h!]
\centering
\includegraphics[width=0.95\linewidth]{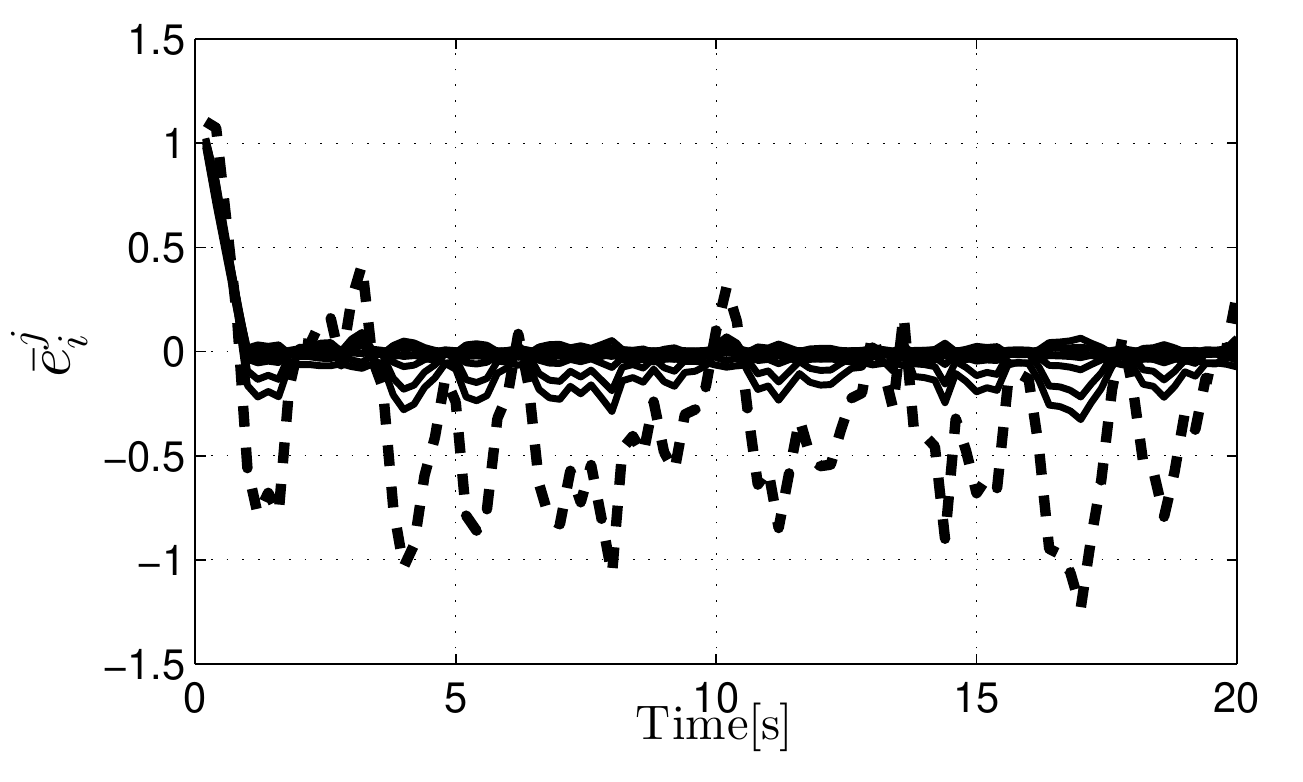}
\caption{The residual signals with respect to time calculated by vehicle $v_1$.}
\label{fig:kndsfasen2}
\end{figure}
\section{Summary and Conclusions} \label{sec:Conclusion}
This paper introduced a cooperative vehicle longitudinal velocity (speed) fault detection and correction algorithm. To perform fault detection and correction algorithm, each vehicle requires to gather some information from other vehicles in the network, including speed, position and acceleration. Hence, a distributed function calculation strategy is used for each vehicle to gather these information from the network in a distributed manner. Then every vehicle operates a specific fault diagnosis algorithm to find out if there exists a failure in its own velocity estimation (or measurement) and correct it. The application of the distributed function calculation to vehicle networks as well as fault diagnosis and correction algorithms were the contributions of this paper. Several simulation results presented to validate the theoretical results. Experimental validation of these algorithms is an avenue for future studies.

\bibliographystyle{IEEEtran}
\bibliography{Main}

\begin{thebibliography}{10}
\providecommand{\url}[1]{#1}
\csname url@rmstyle\endcsname
\providecommand{\newblock}{\relax}
\providecommand{\bibinfo}[2]{#2}
\providecommand\BIBentrySTDinterwordspacing{\spaceskip=0pt\relax}
\providecommand\BIBentryALTinterwordstretchfactor{4}
\providecommand\BIBentryALTinterwordspacing{\spaceskip=\fontdimen2\font plus
\BIBentryALTinterwordstretchfactor\fontdimen3\font minus
  \fontdimen4\font\relax}
\providecommand\BIBforeignlanguage[2]{{%
\expandafter\ifx\csname l@#1\endcsname\relax
\typeout{** WARNING: IEEEtran.bst: No hyphenation pattern has been}%
\typeout{** loaded for the language `#1'. Using the pattern for}%
\typeout{** the default language instead.}%
\else
\language=\csname l@#1\endcsname
\fi
#2}}

\bibitem{Shensherman}
N.~Lu, N.~Cheng, and N.~Zhang, ``Connected vehicles: Solutions and
  challenges,'' \emph{IEEE Internet of Things Journal}, vol.~1, pp. 289--299,
  2014.

\bibitem{Linshen}
N.~Liu, ``Internet of vehicles: Your next connection,'' \emph{Huawei WinWin},
  vol.~11, pp. 23--28, 2011.

\bibitem{Stankovic}
J.~A. Stankovic, ``Research directions for the internet of things,'' \emph{IEEE
  Internet of Things Journal}, pp. 3--9, 2014.

\bibitem{noureddin}
N.~E. Faouzi, H.~Leung, and A.~Kurian, ``Data fusion in intelligent
  transportation systems: Progress and challenges – a survey,''
  \emph{Information Fusion}, pp. 4--10, 2011.

\bibitem{Tubaishat}
M.~Tubaishat, P.~Zhuang, Q.~Qi, and Y.~Shang, ``Wireless sensor networks in
  intelligent transportation systems,'' \emph{WIRELESS COMMUNICATIONS AND
  MOBILE COMPUTING}, pp. 287--302, 2008.

\bibitem{Patra}
M.~Patra, R.~Thakur, and C.~S.~R. Murthy, ``Improving delay and energy
  efficiency of vehicular networks using mobile femto access points,''
  \emph{IEEE Transactions on vehicular Technology}, pp. 1496--1505, 2017.

\bibitem{Wenzel}
T.~A. Wenzel, K.~J. Burnham, M.~Blundell, and R.~Williams, ``Motion dual
  extended kalman filter for vehicle state and parameter estimation,'' in
  \emph{Vehicle Syst. Dyn}, vol.~44, 2006, pp. 153--171.

\bibitem{Antonov2011a}
S.~Antonov, A.~Fehn, and A.~Kugi, ``{Unscented Kalman Filter for Vehicle State
  Estimation},'' \emph{Vehicle System Dynamics}, vol.~49, no.~9, pp.
  1497--1520, 2011.

\bibitem{piraniTCST}
M.~Pirani, E.~Hashemi, A.~Khajepour, B.~Fidan, A.~Kasaiezadeh, S.~Chen, and
  B.~Litkouhi, ``Resilient corner based vehicle velocity estimation,''
  \emph{IEEE Transactions on Control Systems Technology}, 2017.

\bibitem{HashemiCEP}
E.~Hashemi, M.~Pirani, A.~Khajepour, A.~Kasaiezadeh, S.-K. Chen, and
  B.~Litkouhi, ``Corner-based estimation of tire forces and vehicle velocities
  robust to road conditions,'' \emph{Control Engineering Practice}, vol.~61,
  pp. 28--40, 2017.

\bibitem{Sundaram2011}
S.~Sundaram and C.~N. Hadjicostis, ``Distributed function calculation via
  linear iterative strategies in the presence of malicious agents,'' \emph{IEEE
  Transactions on Automatic Control}, vol.~56, no.~7, pp. 1495--1508, 2011.

\bibitem{Olfatishamma}
R.~Olfati-Saber and J.~F. Shamma, ``Consensus filters for sensor networks and
  distributed sensor fusion,'' in \emph{IEEE Conference on Decision and
  Control}, 2005.

\bibitem{Olfatifilter}
R.~Olfati-Saber, ``Kalman-consensus filter: Optimality, stability, and
  performance,'' in \emph{IEEE Conference on Decision and Control}, 2009.

\bibitem{Riberio}
I.~D. Schizas, A.~Ribeiro, and G.~B. Giannakis, ``Consensus in ad hoc wsns with
  noisy links— part i: Distributed estimation of deterministic signals,'' in
  \emph{IEEE Transactions on Signal Processing}.\hskip 1em plus 0.5em minus
  0.4em\relax IEEE, 2008, pp. 350--364.

\bibitem{Kamgarpour}
M.~Kamgarpour and C.~Tomlin, ``Convergence properties of a decentralized kalman
  filter,'' in \emph{IEEE Conference on Decision and Control}, 2008.

\bibitem{Fawzi}
H.~Fawzi, P.~Tabuada, and S.~Diggavi, ``Secure estimation and control for
  cyber-physical systems under adversarial attacks,'' \emph{IEEE Transactions
  on Automatic Control}, vol.~59, pp. 1454--1467, 2014.

\bibitem{Sundaram}
S.~Sundaram and C.~N. Hadjicostis, ``Distributed function calculation and
  consensus using linear iterative strategies,'' \emph{IEEE Journal on Selected
  Areas in Communications}, vol.~26, pp. 650--660, 2008.

\bibitem{Varayaa}
V.~Borkar and P.~Varaya, ``Asymptotic agreement in distributed estimation,'' in
  \emph{IEEE Transactions on Automatic Control}, vol.~27, 1982, pp. 650--655.

\bibitem{Cattivelli}
F.~S. Cattivelli and A.~H. Sayed, ``Diffusion lms strategies for distributed
  estimation,'' in \emph{IEEE Transactions on Signal Processing}, vol.~58,
  2010, pp. 1035--1048.

\bibitem{Shreyasmitra}
A.~Mitra and S.~Sundaram, ``An approach for distributed state estimation of lti
  systems,'' in \emph{Annual Allerton Conference on Communication, Control, and
  Computing}, 2016.

\bibitem{Speranzon}
A.~Speranzon, C.~Fischione, and K.~H. Johansson, ``Distributed and
  collaborative estimation over wireless sensor networks,'' in \emph{IEEE
  Conference on Decision and Control}, 2006.

\bibitem{PiraniITS}
M.~Pirani, E.~Hashemi, J.~W. Simpson-Porco, B.~Fidan, and A.~Khajepour, ``Graph
  theoretic approach to the robustness of k-nearest neighbor vehicle
  platoons,'' \emph{IEEE Transactions Intelligent Transportation Systems (to
  appear)}, 2017.

\bibitem{PiraniSundaramArxiv}
M.~Pirani and S.~Sundaram, ``On the smallest eigenvalue of grounded {Laplacian}
  matrices,'' \emph{IEEE Transactions on Automatic Control}, vol.~61, no.~2,
  pp. 509--514, 2016.

\bibitem{ArxiveRobutness}
M.~Pirani, E.~M. Shahrivar, B.~Fidan, and S.~Sundaram, ``Robustness of leader -
  follower networked dynamical systems,'' \emph{arXiv:1604.08651v1}, 2016.

\bibitem{Hajitouri}
C.~N. Hadjicostis and R.~Touri, ``Feedback control utilizing packet dropping
  network links,'' \emph{Proceedings of the 41st IEEE Conference on Decision
  and Control}, pp. 1205--1210, 2002.

\bibitem{Sieler}
P.~Seiler and R.~Sengupta, ``Analysis of communication losses in vehicle
  control problems,'' \emph{In Proceedings of the 2001 American Control
  Conference}, pp. 1491--1496, 2001.

\bibitem{Hespsurvey}
J.~P. Hespanha, P.~Naghshtabrizi, and Y.~Xu, ``A survey of recent results in
  networked control systems,'' \emph{Proceedings of the IEEE}, pp. 138--162,
  2007.

\bibitem{Reinschke}
K.~Reinschke, ``Multivariable control: A graph-theoretic approach,''
  \emph{Springer}, 1987.

\bibitem{Peleg}
D.~Peleg, ``Local majority voting, small coalitions and controlling monopolies
  in graphs: A review,'' \emph{Proc. of 3rd Colloquium on Structural
  Information and Communication Complexity}, pp. 152--169, 1997.

\bibitem{BoydLall}
L.~Xiao, S.~Boyd, and S.~Lall, ``A scheme for robust distributed sensor fusion
  based on average consensus,'' \emph{Fourth International Symposium on
  Information Processing in Sensor Networks}, vol.~50, pp. 63--70, 2005.

\bibitem{Aharoni}
R.~Aharoni, ``Menger's theorem for graphs containing no infinite paths,''
  \emph{Europ. J. Combinatorics}, vol.~4, pp. 201--204, 1983.

\bibitem{Intanagonwiwat}
C.~Intanagonwiwat, R.~Govindan, and D.~Estrin, ``Directed diffusion: A scalable
  and robust communication paradigm for sensor networks,'' \emph{Proceedings of
  the 6th annual international conference on Mobile computing and networking},
  pp. 56--67, 2000.

\bibitem{Rahman}
A.~Rahman, W.~Olesinski, and P.~Gburzynski, ``Controlled flooding in wireless
  ad-hoc networks,'' \emph{International Workshop on Wireless Ad-Hoc Networks},
  pp. 73--78, 2004.

\end{thebibliography}

\end{document}